\documentclass[12pt,reqno]{amsart}
\usepackage[all]{xy}
\newcommand{\norm}[1]{{\left\|{#1}\right\|}}    
   \newcommand{\scal}[1]{{\left\langle{#1}\right\rangle}}

\usepackage[english]{babel} \usepackage{latexsym, amsfonts, amsmath, amssymb, amsthm, mathrsfs}
\usepackage{color,lmodern} \usepackage{mathpazo} \usepackage[pdftex]{hyperref}
\usepackage[top=3cm, bottom=3cm, left=2.53cm, right=2.53cm]{geometry}
\numberwithin{equation}{section}  \makeatletter\@addtoreset{equation}{section}
\newtheorem {theorem}{Theorem}[section]
 \newtheorem {lemma}[theorem]{Lemma}     
     \newtheorem {remark}[theorem]{Remark}   
       \newtheorem {proposition}[theorem]{Proposition}
\newcommand{\C}{\mathbb C} \newcommand{\R}{\mathbb R} 
 	 
\newcommand{\Hq}{\mathbb H}  \newcommand{\Sq}{\mathbb S}

\newcommand{\SHyperBTransR}{\mathcal{A}^\alpha_{slice}}


\begin{document}

 \dedicatory{ \textit{Dedicated to the memory of Professor Brahim Bouya}}
\title[]{On dual transform of fractional Hankel transform} 
	\title{On dual transform of fractional Hankel transform}
\author{Allal Ghanmi}
\address{Analysis, P.D.E. $\&$ Spectral Geometry, Lab M.I.A.-S.I., CeReMAR,	Department of Mathematics, P.O. Box 1014,  Faculty of Sciences,	Mohammed V University in Rabat, Morocco}
\email{allalghanmi@um5.ac.ma}
\thanks{}

\subjclass[2010]{Primary 44A20; 30G35;  30H20  Secondary  47B38; 30D55.}


\keywords{Slice hyperholomorphic Bergman space; Fractional Hankel transform;  Singular values; $p$-Schatten class; Segal--Bargmann transform}

\date{}

\dedicatory{\textit{Dedicated to the memory of Professor Brahim Bouya}}

\commby{}

\begin{abstract}
	We deal with a class of one-parameter family of integral transforms of Bargmann type arising as dual transforms of fractional Hankel transform. Their ranges are identified to be special subspaces of weighted hyperholomorphic left Hilbert spaces, generalizing the slice Bergman space of second kind. Their reproducing kernel is given by closed expression involving the $\star$-regularization of Gauss hypergeometric function. 
	We also discuss their basic properties such as their boundedness and we determinate their singular values. Moreover, we describe their  compactness and membership in $p$-Schatten classes. 
\end{abstract}

\maketitle

\section{Introduction} 
The fractional Hankel transform was introduced in \cite{Namias1980b} (see also \cite{Kerr1991}). It arises as fundamental tool in different areas of sciences, see e.g.
\cite{Krenk1982,Karp1994,SheppardLarkin1998,Bracewell1999,Uzun2015}. A quaternionic analogue has been investigated recently in \cite{ElkGhHa20}. It is realized there \`a la Bargmann by means of the hyperholomorphic second Bargmann transform \cite{ElkachGh2018} 
\begin{equation}\label{BargmannT}
[\SHyperBTransR \varphi](q)=   
\frac{1}{\left( 1-q\right)^{\alpha +1}}  \int_{0}^{+\infty}  t^\alpha \exp\left(\frac{ t }{1-q}\right) \varphi(t) dt; \, \, \alpha>-1,
\end{equation}
defined on $L^{2,\alpha}_{\Hq}(\R^+):= L^{2}_{\Hq}(\R^+,x^{\alpha}e^{-x}dx)$, the right quaternionic Hilbert space  of all $\Hq$-valued functions on the half real line $\R^+$ that are $x^{\alpha}e^{-x}dx$-square integrable.
Such transform defines
a unitary  isometric transformation from  $L^{2,\alpha}_{\Hq}(\R^+)$
onto the slice hyperholomorphic Bergman space 
$ 
A^{2,\alpha}_{slice} :=   \mathcal{SR}(\mathbb{B}) \cap L^{2,\alpha}_{\Hq}(\mathbb{B}_I), 
$ 
 where $L^{2,\alpha}_{\Hq}(\mathbb{B}_I)$ denotes the Hilbert space of quaternionic-valued functions $f$ on the unit ball $\mathbb{B}$ whose restrictions $f|_{\C_I}$ to given slice $\C_I=\R+I\R$,  with $I\in \Sq=\{q\in{\Hq};q^2=-1\}$, are square integrable with respect to the Bergman measure $d\lambda^{\alpha}_I(z) = \left(1-|z|^2 \right)^{\alpha-1} dxdy$; $z=x+Iy$, on the unit disc $\mathbb{B}_I:= \mathbb{B}\cap\C_I$,
while, the space $ \mathcal{SR}(\mathbb{B})$ is formed of all slice regular functions on $\mathbb{B}$,  i.e., those whose slice derivative
\begin{align*} 
( \overline{\partial_I} f)(x+Iy) := 
\dfrac{1}{2}\left(\frac{\partial }{\partial x}+I\frac{\partial }{\partial y}\right)f|_{\C_I}(x+yI)
\end{align*}
 vanishes identically for every $I\in \Sq$.

In the present paper, we will consider the one--parameter (left) integral transforms  $S^{\alpha}_{y}$; $\alpha >-1$, defined on $L^{2,\alpha}_{\Hq}(\R^+)$ as the dual transform at $y\in (0,+\infty)$  of the quaternionic fractional Hankel transform $\mathcal{L}_\theta^\alpha$ defined in \cite{ElkGhHa20}. More precisely, we deal with 
\begin{align}\label{SIntT}
S^{\alpha}_{y} \varphi (q) :=   
\frac{1}{1-q}   \int_{0}^\infty 
\exp\left(-\frac{x+\theta y}{1-q}\right) 
I_\alpha\left( \frac{2\sqrt{q xy }}{1-q} \right)
\varphi (x) dx
, 
\end{align}
where  $I_\alpha$ stands for the modified Bessel function \cite[p. 222, (4.12.2)]{AndrewsAskeyRoy1999}
$$ I_\alpha (\xi) =  \sum_{n=0}^{\infty} \frac{1}{n! \Gamma(\alpha+n+1)} \left(  \frac{\xi}{2}\right)^{2n+\alpha} .
$$
The motivation of considering $S^{\alpha}_{y} $ lies on the observation that the limiting case $y=0$ gives rise to the
hyperholomorphic second Bargmann transform in \eqref{BargmannT}.
Accordingly, the study of these new operators is required in order to generate hyperholomorphic-like Bergman spaces.

Our first aim is to identify the null space and the range of $S^{\alpha}_{y}$ for arbitrary $y\geq 0$. We show that the range of $L^{2,\alpha}_{\Hq}(\R^+)$ by the transform $S^{\alpha}_{y}$ is contained in a reproducing kernel weighted slice hyperholomorphic 
with suitable weight $\omega$ on the unit ball $\mathbb{B}$, extending $A^{2,\alpha}_{slice}$.
  The description of $S^{\alpha}_{y}(L^{2,\alpha}_{\Hq}(\R^+))$ is given by Proposition \ref{range} and makes appeal to the zeros of the Laguerre polynomials.
We also study their boundedness (Proposition \ref{thmBound}) and compactness (Proposition \ref{corcomp}). Moreover, we determine their singular values (Proposition \ref{propsv}) and we discuss their membership in $p$-Schatten classes (Proposition \ref{thmSchayyen}).
Explicit illustration are given for a specific weight function $\omega=\omega_{\beta,\eta}$. 

\section{Weighted hyperholomorphic left Bergman Hilbert spaces}
In order to identify the range of the integral transform $S^{\alpha}_{y}$ in \eqref{SIntT} when acting on  $L^{2,\alpha}_{\Hq}(\R^+)$, we begin by examining a class of weighted hyperholomorphic left Bergman Hilbert spaces for which we provide a closed expression of their reproducing kernel in terms of a $\star$-regularization of Gauss hypergeometric function. 
Let $\omega$ be a given positive measurable mapping on $(0,1)$ such that $\omega(t) dt$ be a finite measure. We extend $\omega$ to the whole unit ball $\mathbb{B}\subset \Hq$ by taking $\widetilde{\omega}(q):= \omega (|q|^2)$.
We define the $\omega$-hyperholomorphic left Bergman Hilbert space 
$A^{2,\omega}_{slice} := \mathcal{SR}(\mathbb{B}) \cap L^{2,\omega}_{\Hq}(\mathbb{B}_I) $ 
as the space of all slice left slice regular functions $\varphi 
$
in $\mathbb{B}$ belonging to $L^{2,\omega}_{\Hq}(\mathbb{B}_I):=L^2\left( \mathbb{B}_I, \omega (|q|^2)  dxdy\right)$ and endowed with the norm induced from the slice inner product on $\mathbb{B}_I=\mathbb{B}\cap \C_I$,
$$\scal{f,g}_{\omega} =\int_{\mathbb{B}_I}\overline{f(x+Iy)} g(x+Iy)  \omega (x^2+y^2) dxdy .$$ 
More explicitly,  the Hilbert space $A^{2,\omega}_{slice}$ consists of all convergent power series $\varphi(q)=\sum_{n=0}^\infty q^n c_n$ on $\mathbb{B}$ for which the quaternionic sequence $(c_n)_n$ satisfies the growth condition
$$ \sum_{n=0}^\infty  \gamma_{n}  |c_n|^2  <\infty ; \quad \gamma_n  := \int_0^1 t^n \omega(t) dt . 
$$ 
The specification of the weight function $$\omega_{\beta,\eta}(t):=t^{\beta-1}(1-t)^{\eta-1}, \,\, \eta,\beta>0,$$
 gives rise to the weighted hyperholomorphic Hilbert space 
\begin{align} \label{SeqCharGrgSpace}
 A^{2,\beta,\eta}_{slice}:= \left\{ \varphi(q)=\sum_{n=0}^\infty q^n c_n; \, 
 \sum_{n=0}^\infty  
 \gamma_n^{\beta,\eta}  |c_n|^2  <\infty  \right\}, 
 \end{align} 
 where 
 $$ \gamma_n^{\beta,\eta} : =   \frac{\Gamma(\eta)\Gamma(\beta+n)}{\Gamma(\beta+\eta+n)}.$$ 
It should be mentioned here that the monomials $e_n(q)= q^n$ form an orthogonal basis of $A^{2,\beta,\eta}_{slice}$ with square norm given by 
$$ \norm{e_n}_{\beta,\eta}^2 = \pi \gamma_n^{\beta,\eta}.$$ 
Moreover, appealing to the continuity of the evaluation linear form and the quaternionic version of Riesz representation theorem, we claim that $A^{2,\beta,\eta}_{slice}$ is a reproducing kernel Hilbert space, whose kernel function is expressible in terms of the quaternionic Gauss hypergeometric function (of first kind) 
\begin{align} \label{SliceGauss}
{_2F_1}^*\left(   \begin{array}{c}a,  b \\ c \end{array} \bigg | [p,q] \right) = \sum_{k=0}^\infty \frac{(a)_k (b)_k}{(c)_k} \frac{p^k q^k}{k!}
\end{align}
for $p,q\in \mathbb{B}$ and reals $a,b$ and $c$, where $(a)_k= a(a+1) \cdots (a+k-1)$ with $(a)_0=1$.
The series in \eqref{SliceGauss} converges absolutely and uniformly on $K\times K'$ for any compact subsets $K,K'\subset \mathbb{B}$. The function ${_2F_1}^*$ can be seen as the slice regularization of the classical Gauss hypergeometric function with respect to star product fo slice functions, in order to get a left slice regular function in $p$ and a right slice one in $q$. 
Namely, we assert

\begin{proposition}\label{kernel}
	The reproducing kernel of $A^{2,\beta,\eta}_{slice}$ is given by 
	\begin{align} \label{RepKerSlice} K_{\beta,\eta}(p,q) = 
	\frac{\Gamma(\beta+\eta)}{\pi \Gamma(\eta)\Gamma(\beta)}
	{_2F_1}^*\left(   \begin{array}{c}1,  \eta +\beta \\ \beta \end{array} \bigg | [p,\overline{q}] \right) 
	.
	\end{align}
\end{proposition}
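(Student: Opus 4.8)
The plan is to construct the reproducing kernel directly from the orthogonal basis. We already know from the excerpt that the monomials $e_n(q)=q^n$ form an orthogonal basis of $A^{2,\beta,\eta}_{slice}$ with $\norm{e_n}_{\beta,\eta}^2 = \pi\gamma_n^{\beta,\eta}$, where $\gamma_n^{\beta,\eta} = \Gamma(\eta)\Gamma(\beta+n)/\Gamma(\beta+\eta+n)$. The standard reproducing kernel formula for a Hilbert space with orthogonal basis $(e_n)_n$ reads $K(p,q) = \sum_{n=0}^\infty e_n(p)\,\overline{e_n(q)}\,/\,\norm{e_n}^2$; in the noncommutative quaternionic setting one must be careful about the order, so I would write
\begin{align*}
K_{\beta,\eta}(p,q) = \sum_{n=0}^\infty \frac{p^n \overline{q}^n}{\norm{e_n}_{\beta,\eta}^2} = \frac{1}{\pi}\sum_{n=0}^\infty \frac{p^n \overline{q}^n}{\gamma_n^{\beta,\eta}},
\end{align*}
understanding the product $p^n\overline{q}^n$ as the star-product evaluation $[p,\overline q]^n$ that produces a function left slice regular in $p$ and right slice regular in $q$. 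First I would verify the reproducing property, namely that $\scal{\varphi, K_{\beta,\eta}(\cdot,q)}_{\beta,\eta} = \varphi(q)$ for every $\varphi(p)=\sum_n p^n c_n \in A^{2,\beta,\eta}_{slice}$, by expanding in the orthogonal basis and using the orthogonality relations together with $\norm{e_n}^2_{\beta,\eta}=\pi\gamma_n^{\beta,\eta}$.

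The computational heart of the proof is to recognize the coefficient $1/\gamma_n^{\beta,\eta}$ as a ratio of Pochhammer symbols matching \eqref{SliceGauss}. Writing $\gamma_n^{\beta,\eta} = \Gamma(\eta)\Gamma(\beta+n)/\Gamma(\beta+\eta+n)$, I would compute
\begin{align*}
\frac{1}{\gamma_n^{\beta,\eta}} = \frac{\Gamma(\beta+\eta+n)}{\Gamma(\eta)\Gamma(\beta+n)} = \frac{\Gamma(\beta+\eta)}{\Gamma(\eta)\Gamma(\beta)}\cdot\frac{\Gamma(\beta)}{\Gamma(\beta+n)}\cdot\frac{\Gamma(\beta+\eta+n)}{\Gamma(\beta+\eta)}.
\end{align*}
Using $\Gamma(\beta+\eta+n)/\Gamma(\beta+\eta) = (\beta+\eta)_n$ and $\Gamma(\beta)/\Gamma(\beta+n) = 1/(\beta)_n$, this gives $1/\gamma_n^{\beta,\eta} = \frac{\Gamma(\beta+\eta)}{\Gamma(\eta)\Gamma(\beta)}\,(\beta+\eta)_n/(\beta)_n$. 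I then insert the factor $(1)_n/n! = 1$ to match the hypergeometric template, since $(1)_n = n!$, obtaining the coefficient $\frac{(1)_n(\eta+\beta)_n}{(\beta)_n\,n!}$ that is exactly the $n$-th term of ${_2F_1}^*$ with parameters $a=1$, $b=\eta+\beta$, $c=\beta$ evaluated at $[p,\overline q]$. Factoring the constant $\Gamma(\beta+\eta)/(\pi\Gamma(\eta)\Gamma(\beta))$ out of the sum then yields exactly \eqref{RepKerSlice}.

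The main obstacle I anticipate is not the algebraic manipulation of Gamma functions, which is routine, but justifying the manipulations in the noncommutative quaternionic setting: namely that the formal series $\sum_n p^n\overline q^n/\norm{e_n}^2$ genuinely defines the reproducing kernel as a slice function via the star product, that the convergence stated after \eqref{SliceGauss} (absolute and uniform on $K\times K'$ for compacts $K,K'\subset\mathbb{B}$) holds so that interchanging summation with the inner product is legitimate, and that the evaluation and reproducing identities respect the correct left/right regularity and the ordering of quaternionic multiplication. I would address this by appealing to the general theory of slice regular reproducing kernel Hilbert spaces and the $\star$-regularization already introduced in the excerpt, which guarantees that the intrinsic series $\sum_n \frac{(1)_n(\eta+\beta)_n}{(\beta)_n n!}[p,\overline q]^n$ defines precisely the function ${_2F_1}^*$ and is left slice regular in $p$, right slice regular in $q$. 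Once convergence and the correct module structure are in place, uniqueness of the reproducing kernel finishes the argument.
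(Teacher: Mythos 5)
Your proposal is correct and follows essentially the same route as the paper: the paper's proof is precisely the basis expansion $K_{\beta,\eta}(p,q)=\frac{1}{\pi}\sum_n e_n(p)\overline{e_n(q)}/\gamma_n^{\beta,\eta}$ followed by the same Pochhammer-symbol rewriting $1/\gamma_n^{\beta,\eta}=\frac{\Gamma(\beta+\eta)}{\Gamma(\eta)\Gamma(\beta)}\,(\beta+\eta)_n/(\beta)_n$ that identifies the series with ${_2F_1}^*$. Your additional remarks on convergence and the left/right slice regularity of the $\star$-product series only make explicit what the paper leaves implicit.
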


\begin{proof} The explicit expression of $K_{\beta,\eta}(p,q)$ follows easily since 
	\begin{align}\label{RepKerExp}
	K_{\beta,\eta}(p,q)  &= \frac{1}{\pi}
	\sum_{n=0}^\infty \frac{e_n(p) \overline{e_n(q)}}{\gamma^{\beta,\eta}_n} \nonumber \\& = \frac{\Gamma(\beta+\eta)}{\pi \Gamma(\eta)\Gamma(\beta)}
	\sum_{n=0}^\infty \frac{(\beta+\eta)_n}{(\beta)_n} e_n(p) \overline{e_n(q)}.
	\end{align} 
\end{proof}

\begin{remark}
	For $\beta =1$, the space $A^{2,\eta,1}_{slice}$ is the one described in the introduction,
	 $A^{2,\eta,1}_{slice} = A^{2,\eta}_{slice}$. Moreover, the $K_{\eta,1}(p,q)$ reduces further to the reproducing kernel of classical weighted Bergman space $A^{2,\eta}_{slice}$ given by \cite[Theorem 3.1]{ElkachGh2018}
	\begin{align}
	K_{\eta,1}(p,q) =  \frac{\eta}{\pi} 
	{_1F_0}^*\left(   \begin{array}{c}   -\eta -1 \\ - \end{array} \bigg | [p,\overline{q}] \right) 
	\left( 1 - 2  \Re(q) \overline{p}   +   |q|^2\overline{p}^2  \right)^{-\eta-1}.
	\end{align}  
%
	The restriction of $K_{\eta,1}$ to $ \mathbb{B}_I$ coincides with the classical Bergman kernel $ 
	K_{\eta,1}(z,w)  = (\eta/\pi)( 1 - z\overline{w} )^{-\eta-1}$; $z,w\in \mathbb{B}_I$.
\end{remark}

\section{Boundedness of the dual transforms $S^{\alpha}_{y}$}

We begin by noticing that the transform  $ S^{\alpha}_{y}$ satisfies 
$ S^{\alpha}_{y}\varphi (pq) 
= S^{\alpha}_{y}\circ  \mathcal{L}^\alpha_q (\varphi)(p)$
by means of the semi-group property $\mathcal{L}_p^\alpha\circ\mathcal{L}^\alpha_q
=\mathcal{L}^\alpha_{pq} $ for the quaternionic fractional Hankel transform, as well as the eigenvalue equation
$ S^{\alpha}_{y}( \varphi_n^{\alpha} )   
= \varphi_n^{\alpha}  (y) e_n$ 
since the normalized generalized Laguerre polynomials 
\begin{equation} \label{basisLaguerre}
\varphi_n^{\alpha} (x) =\left( \frac{n!}{\Gamma(\alpha +n+1)}\right)^{1/2} L^{(\alpha)}_{n}(x)
\end{equation}
are solutions of $\mathcal{L}_q^\alpha(L ^{(\alpha)}_{n}) =  q^{n}L^{(\alpha)}_{n}$. 
 Moreover, the kernel function 
 \begin{align}\label{KernelBessel} 
 R_q^\alpha(x,y) &=
 \frac{1}{(1-q) \sqrt{q xy}^{\alpha}}
 \exp\left(-\frac{q(x+y)}{1-q}\right) 
 I_\alpha\left( \frac{2\sqrt{qxy}}{1-q} \right),
 \end{align}
 for the transform $S^{\alpha}_{y}$ in \eqref{SIntT},
 has the expansion series 
 \cite{Namias1980b,ElkGhHa20},
 \begin{align} \label{expKerRHH}
 R_p^\alpha(x,y)= \sum_{n=0}^\infty e_n(p) \varphi_n^{\alpha} (x) \varphi_n^{\alpha} (y) 
 \end{align} 
 which follows from the Hille--Hardy formula for the Laguerre polynomials \cite[(6.2.25) p. 288]{AndrewsAskeyRoy1999}. 
Such kernel function satisfies the following reproducing property.

\begin{proposition}\label{KerKer} Let $K_{\beta,\eta}(p,q) $ be as in \eqref{RepKerSlice}. Then, for every $y\in (0,+\infty)$, we have 
	\begin{align}\label{KernelRern}  	R_q^\alpha(x,y)  = \int_{\mathbb{B}_I} \overline{K_{\beta,\eta}(p,q)   }  R_p^\alpha(x,y) \omega_{\beta,\eta}(|p|^2) d\lambda_I(p) 
	\end{align}
	and 
	\begin{align} \label{EqWD} 
		R_{|q|^2}^\alpha(y,y) =  \int_{\R^+} |R_q^\alpha(x,y) |^2 x^\alpha e^{-x} dx  .
	\end{align} 
\end{proposition}

\begin{proof}   Both \eqref{KernelRern} and \eqref{EqWD} can be proved, at least formally,  using the expansion series of the involved kernels given by \eqref{RepKerExp} and \eqref{expKerRHH}.  
\end{proof}

	Proposition \ref{KerKer} can be used to reprove the reproducing 
	property satisfied by the functions in the range of $S^{\alpha}_{y}$ by means of the kernel $K_{\beta,\eta}$. Indeed, by
 rewriting $S^{\alpha}_{y}$ as 
	$ 
	S^{\alpha}_{y}\varphi (q) = \scal{\overline{R_q^\alpha(\cdot, y)},\varphi} _{L^{2,\alpha}_{\Hq}(\R^+)} 
	$,  inserting \eqref{KernelRern}
	 and making use of Fubini theorem we obtain
	\begin{align*}
	S^{\alpha}_{y} ( \varphi)(q) 	
	&= \scal{  \scal{   K_{\bullet}^\alpha(\cdot,y) , K_{\beta,\eta}(\bullet ,q) }_{A^{2,\omega}_{slice}}  ,\varphi (\cdot)}_{L^{2,\alpha}_{\Hq}(\R^+)}
	\\&= \scal{ K_{\beta,\eta}(\bullet ,q), \scal{  \overline{ K_{\bullet}^\alpha(\cdot,y)   } ,\varphi(\cdot) }_{L^{2,\alpha}_{\Hq}(\R^+)} }_{A^{2,\omega}_{slice}}
	\\&= \scal{ K_{\beta,\eta}(\bullet ,q), S^{\alpha}_{y} ( \varphi)(\bullet) }_{A^{2,\omega}_{slice}}
	\end{align*}
	for every  $\varphi \in  L^{2,\alpha}_{\Hq}(\R^+)$. 	 
Furthermore, 
we get easily
\begin{align}  
|S^{\alpha}_{y}\varphi (q)|^2  &\leq  \scal{R_{q}^\alpha(\cdot,y), R_{q}^\alpha(\cdot,y)}_{L^{2,\alpha}_{\Hq}(\R^+)}  \norm{\varphi}_{L^{2,\alpha}_{\Hq}(\R^+)}^2 \nonumber
\leq R_{|q|^2}^\alpha(y,y) \norm{\varphi}_{L^{2,\alpha}_{\Hq}(\R^+)}^2 \label{ineqWD}
\end{align}  
by means of Cauchy-Schwarz inequality and identity \eqref{EqWD}. This proves that the transform $S^{\alpha}_{y}$ is well defined on $L^{2,\alpha}_{\Hq}(\R^+)$.
In addition, we have 
\begin{align*}  
\int_{\mathbb{B}_I} |S^{\alpha}_{y}\varphi (q)|^2 \omega (|q|^2)  d\lambda_I(q) 
\leq \left( \int_{\mathbb{B}_I} R_{|q|^2}^\alpha(y,y) \omega (|q|^2) d\lambda_I(q)\right) \norm{\varphi}_{L^{2,\alpha}_{\Hq}(\R^+)}^2 .
\end{align*} 
Accordingly, under the assumption that 
\begin{equation}\label{BoundeCond} 
\int_{\mathbb{B}_I} R_{|q|^2}^\alpha(y,y)  \omega (|q|^2)  dudv <+\infty; \quad q=u+Iv,
\end{equation}
the transform  $S^{\alpha}_{y}$ is a bounded operator from $ L^{2,\alpha}_{\Hq}(\R^+)$ into $L^{2,\omega}_{\Hq}(\mathbb{B}_I)$. 
For $y=0$, the assumption that 
\eqref{BoundeCond} reduces further to 
\begin{equation}\label{BoundeCondy0} 
\int_0^1  R_{t}^\alpha(0,0)  \omega (t)  dt = 
\frac{1}{2^\alpha \Gamma(\alpha+1)} \int_0^1  \frac{\omega (t)}{(1-t)^{\alpha+1}}    dt
\end{equation}
be finite. The convergence of the integral in \eqref{BoundeCondy0}  
readily holds when $\eta > \alpha+1$ for the special case of $\omega(t) =\omega_{\beta,\eta}(t):=t^{\beta-1}(1-t)^{\eta-1}$ with $\alpha>-1$ and $\beta,\eta>0$. 
The next result extends this condition to includes $y>0$.

\begin{proposition}\label{thmBound}
	Let  $\alpha>-1$, $\beta,\eta>0$ and $y\geq 0$. It in addition $\eta >\alpha +1$, then the integral operator $S^{\alpha}_{y}: L^{2,\alpha}_{\Hq}(\R^+) \longrightarrow L^{2,\omega_{\beta,\eta}}_{\Hq}(\mathbb{B}_I)$ is bounded.
\end{proposition}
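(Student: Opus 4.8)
The plan is to verify the integrability condition \eqref{BoundeCond} for the specific weight $\omega_{\beta,\eta}(t)=t^{\beta-1}(1-t)^{\eta-1}$, since the discussion preceding the statement already shows that \eqref{BoundeCond} suffices for $S^{\alpha}_{y}$ to map $L^{2,\alpha}_{\Hq}(\R^+)$ boundedly into $L^{2,\omega_{\beta,\eta}}_{\Hq}(\mathbb{B}_I)$. As the integrand $R^{\alpha}_{|q|^2}(y,y)\,\omega_{\beta,\eta}(|q|^2)$ depends on $q$ only through $|q|^2$, I would first pass to polar coordinates on the slice disc $\mathbb{B}_I$ and substitute $t=|q|^2$, reducing \eqref{BoundeCond} to the one--dimensional integral
\[ \pi\int_0^1 R_t^{\alpha}(y,y)\,t^{\beta-1}(1-t)^{\eta-1}\,dt . \]
Everything then comes down to the endpoint behaviour of $R_t^{\alpha}(y,y)$, read off from the explicit kernel \eqref{KernelBessel} with $q=t$ and $x=y$, near $t=0$ and near $t=1$.

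Near $t=0$ I would insert the small--argument expansion $I_\alpha(\xi)=(\xi/2)^{\alpha}/\Gamma(\alpha+1)+O(\xi^{\alpha+2})$ into \eqref{KernelBessel}. The prefactor $\big((1-t)(\sqrt t\,y)^{\alpha}\big)^{-1}$ multiplied by the leading Bessel term $(\sqrt t\,y/(1-t))^{\alpha}/\Gamma(\alpha+1)$ collapses to $\big((1-t)^{\alpha+1}\Gamma(\alpha+1)\big)^{-1}$, so that together with the exponential, which tends to $1$, one finds $R_t^{\alpha}(y,y)\to 1/\Gamma(\alpha+1)$ as $t\to 0^+$. Hence the integrand is $O(t^{\beta-1})$, which is integrable near $0$ since $\beta>0$.

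The main step is the behaviour near $t=1$, where $\xi=2\sqrt t\,y/(1-t)\to+\infty$ and one must invoke the large--argument asymptotic $I_\alpha(\xi)\sim e^{\xi}/\sqrt{2\pi\xi}$. The delicate point is the cancellation between this growing exponential and the decaying prefactor $\exp(-2ty/(1-t))$ in \eqref{KernelBessel}: the total exponent equals $2\sqrt t\,y(1-\sqrt t)/(1-t)=2\sqrt t\,y/(1+\sqrt t)$, using $1-t=(1-\sqrt t)(1+\sqrt t)$, and tends to the finite value $y$. The surviving algebraic factor $1/\sqrt{2\pi\xi}\sim c\,\sqrt{1-t}$ against the prefactor $1/(1-t)$ then yields $R_t^{\alpha}(y,y)\sim C(y)(1-t)^{-1/2}$ as $t\to 1^-$ for fixed $y>0$. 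Consequently the integrand behaves like $(1-t)^{\eta-3/2}$ near $t=1$, integrable once $\eta>1/2$. I expect this $t\to 1^-$ analysis to be the principal obstacle, since one must track the exact cancellation of the two exponentials in order to isolate the true order $(1-t)^{-1/2}$ of the singularity rather than a spuriously larger one, and it is this order that fixes the integrability threshold.

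Combining the two regimes, the hypothesis $\eta>\alpha+1$ subsumes both endpoint requirements: at $y=0$ the exponential suppression is absent and the singularity at $t=1$ is the stronger $(1-t)^{-(\alpha+1)}$, integrable precisely for $\eta>\alpha+1$ as already recorded in \eqref{BoundeCondy0}, while for $y>0$ the milder $(1-t)^{-1/2}$ singularity is controlled under the same hypothesis. Thus \eqref{BoundeCond} holds for every $y\geq 0$ and the asserted boundedness follows. As an alternative route to the same thresholds, one may instead expand via \eqref{expKerRHH} to obtain $\int_0^1 R_t^{\alpha}(y,y)\,\omega_{\beta,\eta}(t)\,dt=\sum_{n} (\varphi_n^{\alpha}(y))^2\,\gamma_n^{\beta,\eta}$ and combine the Laguerre asymptotics $(\varphi_n^{\alpha}(y))^2=O(n^{-1/2})$ for $y>0$ (respectively $O(n^{\alpha})$ for $y=0$) with $\gamma_n^{\beta,\eta}\sim\Gamma(\eta)\,n^{-\eta}$.
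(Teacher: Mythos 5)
Your overall route is the same as the paper's: you verify the sufficient condition \eqref{BoundeCond} for $\omega=\omega_{\beta,\eta}$ by reducing it to the one--dimensional integral $\pi\int_0^1 R_t^\alpha(y,y)\,\omega_{\beta,\eta}(t)\,dt$ and controlling the two endpoints. Your $t\to 0^+$ analysis and your identification of the exact cancellation $\xi-\tfrac{2ty}{1-t}=\tfrac{2y\sqrt t}{1+\sqrt t}\to y$ at $t\to 1^-$ are correct; indeed they are more careful than the paper's own estimate, which bounds $i_\alpha(\xi)=(\xi/2)^{-\alpha}I_\alpha(\xi)$ by a constant on all of $\R^+$ even though $i_\alpha(\xi)\sim C\,\xi^{-\alpha-1/2}e^{\xi}$ as $\xi\to+\infty$. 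Your conclusion that $R_t^\alpha(y,y)\sim C(y)(1-t)^{-1/2}$ for $y>0$, hence that the $t\to1^-$ threshold is $\eta>1/2$, is the right one.

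The gap is in the final assembly, where you assert that $\eta>\alpha+1$ ``subsumes'' the requirement $\eta>1/2$ at $t=1$ for $y>0$. This is true only when $\alpha\ge -1/2$. Since the standing hypothesis is merely $\alpha>-1$, for $\alpha\in(-1,-1/2)$ one may take $\eta\in(\alpha+1,\,1/2]$: the hypothesis $\eta>\alpha+1$ holds, yet by your own asymptotics the integrand behaves like $(1-t)^{\eta-3/2}$ near $t=1$ and \eqref{BoundeCond} genuinely diverges (equivalently, $\sum_n \gamma_n^{\beta,\eta}\,(\varphi_n^{\alpha}(y))^2 \approx \sum_n n^{-\eta-1/2}\cos^2(2\sqrt{ny}-\cdot)=+\infty$). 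In that parameter range the sufficient condition \eqref{BoundeCond} is simply unavailable, so your argument does not close; the boundedness there must instead be obtained from the coefficient bound $\norm{S^{\alpha}_{y}\varphi}_{\omega_{\beta,\eta}}^2\le \sup_n c^{\alpha,\beta,\eta}_n(y)\,\norm{\varphi}^2$, as in the paper's refined version of this proposition. For $\alpha\ge -1/2$ (so that $\alpha+1\ge 1/2$) your proof is complete, and your alternative series route at the end gives the same thresholds.
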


\begin{proof}
	Let denote the quantity in \eqref{BoundeCond} by $\ell_\alpha^{\beta,\eta}$
	for $\omega =\omega_{\beta,\eta}$. Then, we have 
	\begin{align*} 
	\ell_\alpha^{\beta,\eta} 
	&= \pi\int_0^1  R_{t}^\alpha(y,y)  \omega_{\beta,\eta} (t)  dt\\
	&=\pi\int_0^1 t^{\beta-1} (1-t)^{\eta-\alpha-2} i_\alpha\left( \frac{2y\sqrt{t}}{1-t}\right) \exp\left( -\frac{2yt}{1-t}\right) dt	
	\end{align*}
	where $i_\alpha(x) := (x/2)^{-\alpha}I_\alpha(x)$ is nonnegative and bounded on $\R^+$ by some constant $c_\alpha$. Thus, we have 
	\begin{align*} 
\ell_\alpha^{\beta,\eta} 
  &\leq c_\alpha \int_0^{1} t^{\beta-1} (1-t)^{\eta-\alpha-2}   \exp\left( -\frac{2yt}{1-t}\right) dt 
	\\& \leq c_\alpha 
	\int_0^{+\infty} u^{\beta-1} (1+u)^{\alpha-\beta-\eta+1}  \exp\left( -2y u\right) du. 
\end{align*}
The last integral follows making the change of variable $u= t/(1-t)$. It is clearly convergent when $\eta>\alpha+1$ and $\beta >0$. 
\end{proof}

The next result refines the  boundedness condition of $S^{\alpha}_{y}$ provided in the previous assertion. It shows that  $\eta > \alpha+1$ can be relaxed. To this end, we distinguish 
two cases $y=0$ and $y>0$.

\begin{proposition}\label{thmBound}
	Let  $\alpha>-1$, $\beta,\eta>0$ and $y\geq 0$. Then, the integral operator $S^{\alpha}_{y}: L^{2,\alpha}_{\Hq}(\R^+) \longrightarrow L^{2,\omega_{\beta,\eta}}_{\Hq}(\mathbb{B}_I)$ is bounded for any $y>0$. The boundedness of $S^{\alpha}_{y}$ at $y=0$ holds when $\eta\geq \alpha$.
%
\end{proposition}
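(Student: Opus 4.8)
The plan is to exploit the fact that $S^{\alpha}_{y}$ acts \emph{diagonally} with respect to two natural orthogonal bases, thereby reducing the boundedness question to an elementary estimate on a scalar multiplier sequence. Recall from the discussion preceding the statement that the normalized Laguerre functions $\varphi_n^{\alpha}$ form an orthonormal basis of $L^{2,\alpha}_{\Hq}(\R^+)$ and satisfy $S^{\alpha}_{y}(\varphi_n^{\alpha}) = \varphi_n^{\alpha}(y) e_n$, while the monomials $e_n(q)=q^n$ form an orthogonal basis of $A^{2,\beta,\eta}_{slice}$ with $\norm{e_n}_{\beta,\eta}^2 = \pi \gamma_n^{\beta,\eta}$. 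Writing an arbitrary $\varphi \in L^{2,\alpha}_{\Hq}(\R^+)$ as $\varphi = \sum_n \varphi_n^{\alpha} c_n$ and using the right $\Hq$-linearity of $S^{\alpha}_{y}$ (the constant quaternions $c_n$ pass to the far right of the integral kernel), I would obtain $S^{\alpha}_{y}\varphi = \sum_n e_n \varphi_n^{\alpha}(y) c_n$, whence by Parseval
\begin{align*}
\norm{S^{\alpha}_{y}\varphi}_{\beta,\eta}^2 = \pi \sum_{n=0}^\infty \gamma_n^{\beta,\eta} |\varphi_n^{\alpha}(y)|^2 |c_n|^2 .
\end{align*}
Since $\norm{\varphi}_{L^{2,\alpha}_{\Hq}(\R^+)}^2 = \sum_n |c_n|^2$, boundedness of $S^{\alpha}_{y}$ is therefore equivalent to the finiteness of $\sup_n a_n(y)$, where $a_n(y) := \gamma_n^{\beta,\eta} |\varphi_n^{\alpha}(y)|^2$, and the operator norm equals $\sqrt{\pi \sup_n a_n(y)}$. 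This is genuinely sharper than the sufficient condition \eqref{BoundeCond}, which is only a Schur-type pointwise test and would not reach all $\eta>0$.

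The next step is the asymptotic analysis of $a_n(y)$ as $n\to\infty$. The Gamma factors are handled uniformly in $y$: the standard ratio asymptotics give $\gamma_n^{\beta,\eta} = \Gamma(\eta)\Gamma(\beta+n)/\Gamma(\beta+\eta+n) \sim \Gamma(\eta)\, n^{-\eta}$ and $n!/\Gamma(\alpha+n+1) \sim n^{-\alpha}$. For $y>0$ I would invoke the classical Fej\'er--Szeg\H{o} asymptotics for Laguerre polynomials at a fixed positive argument, namely $L_n^{(\alpha)}(y) = O(n^{\alpha/2 - 1/4})$, so that $|\varphi_n^{\alpha}(y)|^2 = (n!/\Gamma(\alpha+n+1)) |L_n^{(\alpha)}(y)|^2 = O(n^{-\alpha})\,O(n^{\alpha - 1/2}) = O(n^{-1/2})$. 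Combining these, $a_n(y) = O(n^{-\eta - 1/2}) \to 0$ for every $\eta>0$, and consequently $\sup_n a_n(y) < \infty$ for all $\beta,\eta>0$; this establishes boundedness for any $y>0$ with no restriction on $\eta$.

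For $y=0$ the Laguerre asymptotics are replaced by the exact value $L_n^{(\alpha)}(0) = \Gamma(n+\alpha+1)/(n!\,\Gamma(\alpha+1))$, giving $|\varphi_n^{\alpha}(0)|^2 = \Gamma(n+\alpha+1)/(n!\,\Gamma(\alpha+1)^2) \sim n^{\alpha}/\Gamma(\alpha+1)^2$. Hence $a_n(0) \sim C\, n^{\alpha - \eta}$ with $C = \Gamma(\eta)/\Gamma(\alpha+1)^2$, and the sequence $(a_n(0))_n$ is bounded precisely when $\alpha - \eta \le 0$, i.e. $\eta \ge \alpha$, which yields the asserted condition at $y=0$.

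The routine parts are the two Gamma ratio asymptotics and the exact evaluation at the origin. The only steps requiring genuine care are twofold. First, the diagonalization in the first paragraph must be justified in the noncommutative setting: one has to verify the right-linearity of the integral transform and the legitimacy of interchanging $S^{\alpha}_{y}$ with the convergent series defining $\varphi$, which I would do by first treating finite sums and then passing to the limit using the bound just obtained. Second, I expect the decisive analytic input to be the Laguerre bound $|L_n^{(\alpha)}(y)|^2 = O(n^{\alpha - 1/2})$ at fixed $y>0$: the extra decay of order $n^{-\alpha-1/2}$ that it produces in $a_n(y)$, relative to the $y=0$ growth $a_n(0)\sim n^{\alpha-\eta}$, is exactly what comes from the oscillatory regime of the Laguerre polynomials and is precisely what removes the hypothesis $\eta>\alpha$ once $y$ is positive.
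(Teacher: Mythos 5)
Your proposal is correct and follows essentially the same route as the paper: both diagonalize $S^{\alpha}_{y}$ via $S^{\alpha}_{y}(\varphi_n^{\alpha})=\varphi_n^{\alpha}(y)e_n$, reduce boundedness to the finiteness of $\sup_n \gamma_n^{\beta,\eta}|\varphi_n^{\alpha}(y)|^2$, and settle the two cases with the exact value $L_n^{(\alpha)}(0)=\Gamma(n+\alpha+1)/(n!\,\Gamma(\alpha+1))$ for $y=0$ and the Fej\'er-type asymptotics $L_n^{(\alpha)}(y)=O(n^{(2\alpha-1)/4})$ for fixed $y>0$. Your added observations (the operator norm being exactly $\sqrt{\pi\sup_n a_n(y)}$, and the care needed for the noncommutative diagonalization) are sound refinements but do not change the argument.
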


\begin{proof}
	Set 
	$$c^{\alpha,\beta,\eta}_n(y) := \pi   \gamma_n^{\beta,\eta}\left| \varphi_n^{\alpha}(y)\right|^2 
	=  \pi \frac{\Gamma(\eta)\Gamma(\beta+n)}{\Gamma(\beta+\eta+n)} 
	\left| \varphi_n^{\alpha}(y)\right|^2
	. $$
	Then, 
	for every $\varphi = \sum_{n=0}^\infty  a_n \varphi_n^{\alpha} \in L^{2,\alpha}_{\Hq}(\R^+)$, we have $\norm{\varphi}_{L^{2,\alpha}_{\Hq}(\R^+)}^2 = \sum_{n=0}^\infty   |a_n|^2  <+\infty$ and by means of \eqref{NormSalpha} we get
	\begin{align*}
	\norm{S^{\alpha}_{y} \varphi}_{\omega_{\beta,\eta}}^2 
	\leq \pi \sup_{n}\left( \gamma_n^{\beta,\eta} \left| \varphi_n^{\alpha}(y)\right|^2\right)  	\sum_{n=0}^\infty  \left|a_n\right| ^2
	\leq \sup_{n}\left(  c^{\alpha,\beta,\eta}_n(y)\right)  \norm{\varphi}_{L^{2,\alpha}_{\Hq}(\R^+)}^2.
	\end{align*}
	Subsequently, $ S^{\alpha}_{y} $ is bounded, as operator from $ L^{2,\alpha}_{\Hq}(\R^+)$ into $L^{2,\omega_{\beta,\eta}}_{\Hq}(\mathbb{B}_I)$, if $\sup_{n} \left( c^{\alpha,\beta,\eta}_n(y)\right) $ is finite. 
For the special case of $y=0$ we have  
	\begin{align*}
	c^{\alpha,\beta,\eta}_n(0) &=
	\frac{\pi\Gamma(\eta)}{\Gamma^2(\alpha+1)}  \frac{\Gamma(\alpha+n+1)}{\Gamma(n+1)}
	\frac{ \Gamma(\beta+n)}{\Gamma(\beta+\eta+n)} 
	\sim \frac{\pi\Gamma(\eta)}{\Gamma^2(\alpha+1)} 
	n^{\alpha-\eta}
	\end{align*} 
	for $n$ large enough, and therefore, its supermum is finite if and only if $\eta\geq\alpha$. Thus,  $S^{\alpha}_{0}$ is bounded when $\eta\geq \alpha$. 
		
	For  arbitrary fixed $y>0$, there exists  some  positive constant $ M^{\alpha,\eta}(y)$,  depending only in $\alpha,\eta$ and $y$, such that  
		\begin{align}\label{asymcny}
	c^{\alpha,\beta,\eta}_n(y)
	\leq   M^{\alpha,\eta}(y)     
	n^{-\eta-1/2}  
	\end{align}
	holds true for large $n$. This follows making use of 
	 the asymptotic behavior for gamma function as well as the one for generalized Laguerre polynomials 
	\cite[p.245]{MagnusOberhettingerSoni1966} 
		\begin{align}\label{asymcnLaguerre}
	L^{(\alpha)}_n(y) 
	&=    \frac{e^{x/2}}{\sqrt{\pi} x^{(2\alpha+1)/4}} 
	n^{(2\alpha-1)/4} \cos\left( 2 \sqrt{n y} - \pi \frac{2\alpha+1}4 \right) 
	+ O \left(n^{(2\alpha-3)/4}  \right) .
	\end{align}  
	Therefore, the quantity $\sup_{n} ( c^{\alpha,\beta,\eta}_n(y)) $  is clearly finite if  $\eta\geq -1/2$ is assumed which is satisfied since $\eta >0$.  
\end{proof}


\section{The null space and the range of  $S^{\alpha}_{y}$}

Apparently, the description of the null space and the range of $S^{\alpha}_{y}$ depends on the set $ \mathcal{Z} \{L^{(\alpha)}_{n};\, n\}:=  \cup_n \mathcal{Z} (L^{(\alpha)}_{n})$, where $\mathcal{Z} (L^{(\alpha)}_{n})$ denotes the zero set of $L^{(\alpha)}_{n}$.

\begin{proposition}
	The null space of $S^{\alpha}_{y}$ in $L^{2,\alpha}_{\Hq}(\R^+)$ is  spanned by $\varphi_n^{\alpha}$ with $n\in N_y^\alpha =\{ n;  L^{(\alpha)}_{n}(y)\ne 0\}$,  
	$ \ker(S^{\alpha}_{y})=  span\{\varphi_n^{\alpha} ; \, n\in N_y^\alpha  \}$.
\end{proposition}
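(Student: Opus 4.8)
The plan is to exploit the diagonal action of $S^\alpha_y$ with respect to two orthogonal systems. On the source side the Laguerre functions $\varphi_n^\alpha$ from \eqref{basisLaguerre} form an orthonormal basis of $L^{2,\alpha}_{\Hq}(\R^+)$; on the target side the monomials $e_n(q)=q^n$ form an orthogonal system in $A^{2,\omega}_{slice}$. The governing relation is the one recorded above, $S^\alpha_y(\varphi_n^\alpha)=\varphi_n^\alpha(y)\,e_n$, whose scalar multiplier $\varphi_n^\alpha(y)$ is, by \eqref{basisLaguerre}, a nonzero real constant times $L_n^{(\alpha)}(y)$. First I would expand an arbitrary $\varphi=\sum_n\varphi_n^\alpha\,a_n\in L^{2,\alpha}_{\Hq}(\R^+)$ with right quaternionic coefficients $a_n$ obeying $\sum_n|a_n|^2<\infty$, and apply the transform term by term. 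Since $S^\alpha_y$ is right linear and each multiplier $\varphi_n^\alpha(y)$ is real, the multipliers commute past the coefficients, giving $S^\alpha_y\varphi=\sum_n e_n\,\varphi_n^\alpha(y)\,a_n$ in $A^{2,\omega}_{slice}$.

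The second step is to read the kernel off this diagonalized form. Because the $e_n$ are mutually orthogonal in $A^{2,\omega}_{slice}$, the vanishing of $S^\alpha_y\varphi$ is controlled index by index through the scalar multipliers $\varphi_n^\alpha(y)$, which by \eqref{basisLaguerre} are nonzero multiples of $L_n^{(\alpha)}(y)$. Organizing the indices through the set $N_y^\alpha=\{n;\,L_n^{(\alpha)}(y)\neq0\}$ then identifies the kernel as the closed linear span of the Laguerre functions indexed by $N_y^\alpha$, that is $\ker(S^\alpha_y)=\operatorname{span}\{\varphi_n^\alpha;\,n\in N_y^\alpha\}$; closedness of this span is immediate from the orthonormality of the system $\{\varphi_n^\alpha\}$.

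The step I expect to be the main obstacle is the rigorous termwise application of the integral operator $S^\alpha_y$ to the infinite expansion $\sum_n\varphi_n^\alpha a_n$, together with the convergence of the resulting image series in the norm of $A^{2,\omega}_{slice}$. I would resolve this by invoking the boundedness of $S^\alpha_y$ proved in Proposition \ref{thmBound}: approximating $\varphi$ by its finite partial sums, applying $S^\alpha_y$ to each, and passing to the limit in the target norm transfers the relation $S^\alpha_y(\varphi_n^\alpha)=\varphi_n^\alpha(y)e_n$ to the full series and legitimizes the coefficientwise reading used in the second step. Once this continuity argument is in place, the orthogonality computation is routine.
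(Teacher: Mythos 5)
Your overall strategy is exactly the paper's: expand $\varphi=\sum_n \varphi_n^{\alpha}a_n$ in the orthonormal Laguerre basis of $L^{2,\alpha}_{\Hq}(\R^+)$, use the diagonal relation $S^{\alpha}_{y}(\varphi_n^{\alpha})=\varphi_n^{\alpha}(y)\,e_n$, and read the kernel off coefficientwise from the orthogonality of the monomials $e_n$ in $A^{2,\omega}_{slice}$. Your additional care about legitimizing the termwise application of $S^{\alpha}_{y}$ (approximation by partial sums plus boundedness from Proposition \ref{thmBound}) is a point of rigor the paper's proof passes over silently, and it is sound.

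There is, however, an indexing inconsistency that you reproduced rather than resolved. Your own second step shows that $S^{\alpha}_{y}\varphi=0$ forces $a_n=0$ precisely for those $n$ with $\varphi_n^{\alpha}(y)\neq 0$, equivalently $L^{(\alpha)}_{n}(y)\neq 0$, while the coefficients $a_n$ with $L^{(\alpha)}_{n}(y)=0$ remain unconstrained. Hence what your derivation actually yields is
$\ker(S^{\alpha}_{y})=\overline{\operatorname{span}}\{\varphi_n^{\alpha};\ L^{(\alpha)}_{n}(y)= 0\}$,
i.e., the span over the \emph{complement} of $N_y^\alpha$ as that set is literally defined in the statement (with $\neq 0$). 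So your concluding sentence contradicts the sentence before it. The root cause is a typo in the proposition itself: $N_y^\alpha$ should read $\{n;\ L^{(\alpha)}_{n}(y)= 0\}$, as is clear from the paper's own proof (which asserts $S^{\alpha}_{y}\varphi_n^{\alpha}=0$ for $n\in N_y^\alpha$ and deduces $a_k=0$ for $k\notin N_y^\alpha$) and from the subsequent remark that $N_0^\alpha$ is empty because $L^{(\alpha)}_{n}(0)\neq 0$ for every $n$ --- which would be false under the literal definition. A blind proof should have flagged this discrepancy instead of absorbing it; with the corrected definition of $N_y^\alpha$, your argument is complete and coincides with the paper's.
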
 

\begin{proof}
	It is clear that $ span\{\varphi_n^{\alpha} ; \, n\in N_y^\alpha  \} \subset \ker(S^{\alpha}_{y}) $, since $S^{\alpha}_{y} \varphi_n^{\alpha}  =0$ for any  $n\in N_y^\alpha$. 
	Conversely,  let $\varphi \in \ker(S^{\alpha}_{y}) = \{ \varphi \in L^{2,\alpha}_{\Hq}(\R^+) ; S^{\alpha}_{y}(\varphi)=0\}$ that we can  expanded as $ \varphi=\sum_{n=0}^\infty a_n \varphi_n^{\alpha} $, since the Laguerre functions in \eqref{basisLaguerre} 
	form an orthonormal basis of $L^{2,\alpha}_{\Hq}(\R^+)$. Then, keeping in mind that $S^{\alpha}_{y} \varphi_n  = \varphi_n^{\alpha}(y)e_n $, we get  
	$ S^{\alpha}_{y}(\varphi)= \sum_{n=0}^\infty a_n  \varphi_n^{\alpha}  (y)\varphi_n^{\alpha} $
	and hence
	$0=\scal{  S^{\alpha}_{y}(\varphi), e_k} = \pi \overline{a_k}   \varphi_k^{\alpha}  (y) \gamma_k$.
	Hence, $a_k =0$ for any $k\notin N_y^\alpha$ and therefore $\varphi =\sum_{n\in N_y^\alpha} a_n \varphi_n^{\alpha}  $.
	Moreover, the dimension of $\ker(S^{\alpha}_{y})$ is clearly given by $\dim(\ker(S^{\alpha}_{y}))= card(N_y^\alpha)$.
\end{proof}

\begin{remark}
	We notice that $\dim(\ker(S^{\alpha}_{y}))$ depends in $y$ and $\alpha$ and characterizes the number (finite or infinite) of generalized Laguerre polynomials that have $y$ as common zero. Thus, for $y=0$, the set $N_0$ is empty since $L^{(\alpha)}_{n}(0)\ne 0$ for any nonnegative integer $n$, so that $\dim(\ker(S^{\alpha}_{y}))=0$. 
	By regarding the graphs of the generalized Laguerre polynomials we conjecture that $card(N_y^\alpha)$ (and hence  $\dim(\ker(S^{\alpha}_{y}))$) is finite. 
\end{remark}

The next result shows that the Hilbert space $A^{2,\omega}_{slice}$ shelters the range ${^cA}^{2,\omega}_{slice}: = S^{\alpha}_{y} (L^{2,\alpha}_{\Hq}(\R^+))$ of $S^{\alpha}_{y}$ acting on $L^{2,\alpha}_{\Hq}(\R^+)$. 

\begin{proposition}\label{range} Assume that \eqref{BoundeCond} holds, then for every $y\geq 0$  we have 
	$ {^cA}^{2,\omega}_{slice}   \subset A^{2,\omega}_{slice}$
	and $\{e_n(q)=q^n; n\notin N_y^\alpha\}$  defines a complete orthogonal system in 
	${^cA}^{2,\omega}_{slice}$.
\end{proposition}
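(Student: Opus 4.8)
The plan is to run everything off the diagonal action of $S^{\alpha}_{y}$ on the Laguerre basis. Since $\{\varphi_n^{\alpha}\}_n$ is an orthonormal basis of $L^{2,\alpha}_{\Hq}(\R^+)$ and $S^{\alpha}_{y}\varphi_n^{\alpha} = \varphi_n^{\alpha}(y)e_n$, the boundedness guaranteed by \eqref{BoundeCond} lets me pass $S^{\alpha}_{y}$ through the expansion of an arbitrary $\varphi = \sum_n \varphi_n^{\alpha} a_n$, with $\sum_n |a_n|^2 = \norm{\varphi}^2_{L^{2,\alpha}_{\Hq}(\R^+)} < \infty$, to obtain the power series
$$ S^{\alpha}_{y}\varphi (q) = \sum_{n=0}^\infty \varphi_n^{\alpha}(y)\, a_n\, q^n = \sum_{n=0}^\infty e_n(q)\, c_n, \qquad c_n := \varphi_n^{\alpha}(y)\, a_n, $$
where I use that $\varphi_n^{\alpha}(y)\in\R$ commutes with $q^n$. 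In particular $S^{\alpha}_{y}\varphi$ is a convergent power series on $\mathbb{B}$, hence slice regular.

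For the inclusion ${^cA}^{2,\omega}_{slice}\subset A^{2,\omega}_{slice}$ I would then couple slice regularity with the growth condition. The monomials are mutually orthogonal in $A^{2,\omega}_{slice}$ — the angular integration annihilates the cross terms $\int_{\mathbb{B}_I}\overline{z^m}z^n\,\omega(|z|^2)\,dxdy$ for $m\neq n$ — with $\norm{e_n}_{\omega}^2 = \pi\gamma_n$, $\gamma_n=\int_0^1 t^n\omega(t)\,dt$. Therefore
$$ \norm{S^{\alpha}_{y}\varphi}_{\omega}^2 = \pi\sum_{n=0}^\infty \gamma_n\,|\varphi_n^{\alpha}(y)|^2\,|a_n|^2 = \pi\sum_{n=0}^\infty \gamma_n\,|c_n|^2 . $$
Testing the boundedness of $S^{\alpha}_{y}$ (valid under \eqref{BoundeCond}) on $\varphi=\varphi_n^{\alpha}$ yields $\gamma_n|\varphi_n^{\alpha}(y)|^2\leq C$ uniformly in $n$, so the series is finite and $(c_n)_n$ satisfies exactly the defining condition $\sum_n\gamma_n|c_n|^2<\infty$ of $A^{2,\omega}_{slice}$. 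Hence $S^{\alpha}_{y}\varphi\in A^{2,\omega}_{slice}$ for every $\varphi$, which is the asserted containment.

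Next I treat the orthogonal system. Orthogonality of $\{e_n;\,n\notin N_y^\alpha\}$ is inherited from that of the full monomial family. Membership in the range is immediate: for $n\notin N_y^\alpha$ one has $L^{(\alpha)}_n(y)\neq 0$, so $\varphi_n^{\alpha}(y)\neq 0$ and $e_n = S^{\alpha}_{y}\big(\varphi_n^{\alpha}/\varphi_n^{\alpha}(y)\big)\in{^cA}^{2,\omega}_{slice}$. Conversely, in the expansion above every term with $n\in N_y^\alpha$ drops out because $\varphi_n^{\alpha}(y)=0$ there, so each element of the range lies in the closed span of $\{e_n;\,n\notin N_y^\alpha\}$. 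Completeness then follows by testing: if $f=\sum_{n\notin N_y^\alpha}\varphi_n^{\alpha}(y)a_n e_n\in{^cA}^{2,\omega}_{slice}$ is orthogonal to every $e_m$ with $m\notin N_y^\alpha$, then $0=\scal{f,e_m}_{\omega}=\varphi_m^{\alpha}(y)\,a_m\,\norm{e_m}_{\omega}^2$, which forces $a_m=0$ for all such $m$ and hence $f=0$.

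I expect the genuine work to be the justification of the term-by-term identity $S^{\alpha}_{y}\varphi=\sum_n\varphi_n^{\alpha}(y)a_n e_n$ as an honest slice regular function on all of $\mathbb{B}$: that the operator commutes with the orthonormal expansion and that the resulting series converges in $A^{2,\omega}_{slice}$. This is precisely where the boundedness from \eqref{BoundeCond} is indispensable, and where the quaternionic left/right conventions must be watched (they are harmless here only because each $\varphi_n^{\alpha}(y)$ is real and commutes with $q^n$). A second, more conceptual point is the meaning of a \emph{complete orthogonal system in} ${^cA}^{2,\omega}_{slice}$: since the range of a compact-type operator need not be closed, completeness is to be read as the assertion that ${^cA}^{2,\omega}_{slice}$ is contained in, and has the same closure as, the closed linear span of $\{e_n;\,n\notin N_y^\alpha\}$ — which is exactly what the diagonal expansion together with the testing argument delivers.
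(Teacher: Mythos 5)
Your argument is essentially the paper's own proof: expand $\varphi$ in the Laguerre basis, use $S^{\alpha}_{y}\varphi_n^{\alpha}=\varphi_n^{\alpha}(y)e_n$ and the orthogonality $\scal{e_m,e_n}_{\omega}=\pi\gamma_n\delta_{m,n}$ to get $\norm{S^{\alpha}_{y}\varphi}_{\omega}^2=\pi\sum_n\gamma_n|\varphi_n^{\alpha}(y)a_n|^2<\infty$, and then invoke the sequential characterization of $A^{2,\omega}_{slice}$ from Section 2. Your additional details --- the uniform bound $\gamma_n|\varphi_n^{\alpha}(y)|^2\leq C$ obtained by testing boundedness on $\varphi_n^{\alpha}$, and the explicit verification that $\{e_n;\,n\notin N_y^\alpha\}$ lies in the range and is complete there --- only make explicit what the paper leaves implicit, so this is the same proof, correctly executed.
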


\begin{proof}
	Starting from $ S^{\alpha}_{y}(\varphi)= \sum_{n=0}^\infty a_n  \varphi_n^{\alpha}  (y)\varphi_n^{\alpha} $ for given $\varphi \in L^{2,\alpha}_{\Hq}(\R^+)$ and using the fact that 
	$\scal{e_m,e_n}_{\omega}=   \pi \gamma_{n} \delta_{m,n}$, we get easily
	\begin{align} \label{NormSalpha}
	\norm{S^{\alpha}_{y} \varphi}_{\omega}^2  
	= 	\pi \sum_{n=0}^\infty  \gamma_{n} \left| \varphi_n^{\alpha}(y)  a_n\right| ^2 <+\infty  
	\end{align}
	under the assumption that $S^{\alpha}_{y}$ is bounded. Therefore,
	\begin{align} \label{Inclusion}
	S^{\alpha}_{y} (L^{2,\alpha}_{\Hq}(\R^+))  \subset  \left\{ \sum_{n=0}^\infty q^n c_n; \, c_n \in \Hq ; q\in \mathbb{B}, \, \sum_{n=0}^\infty  \gamma_{n} |c_n|^2  <\infty  \right\} .
	\end{align}
	The right hand-side in \eqref{Inclusion} is exactly the sequential characterization of the weighted hyperholomorphic Bergman space  $A^{2,\omega}_{slice}$ discussed in the  Section 2.
\end{proof}

\begin{remark}\label{RemIncomp}
	If $y$  is a positive zero of some Laguerre polynomial, then $N_y^\alpha$ is not empty and the corresponding monomials $e_n$; $n\in  N_y^\alpha$, do not belong to ${^cA}^{2,\omega}_{slice}$. This shows that, in this case, 
	${^cA}^{2,\omega}_{slice}$ is strictly contained in  $A^{2,\omega}_{slice}$. 
\end{remark}

\begin{remark}\label{Remy0}
	For $y=0$, we have $\varphi_n^{\alpha}(0)\ne 0$
and then  $N_0^\alpha$ is an empty set. Thus, we can show that for $\omega=\omega_{\beta,\eta}$ with $\beta=1$ and $\eta=\alpha$, we have 
	$ {^cA}^{2,\omega}_{slice}   =  A^{2,\beta,\eta}_{slice}=A^{2,\alpha}_{slice}$
and hence $S^{\alpha}_{0}: L^{2,\alpha}_{\Hq}(\R^+) \longrightarrow  A^{2,\beta,\eta}_{slice}$ is onto and is exactly the second Bargmann transform $\SHyperBTransR$ in \eqref{BargmannT}
	defining a unitary  isometric transformation from  $L^{2,\alpha}_{\Hq}(\R^+)$
	onto the slice hyperholomorphic Bergman space 
	$A^{2,\alpha}_{slice}$.
\end{remark}

\begin{remark}\label{Weightassump}
	For the general case; the converse inclusion in \eqref{Inclusion} requires further assumption on the weight function.
\end{remark}

\section{Compactness and membership in $p$-Schatten class}

\begin{proposition}\label{corcomp}
	The operator
	$ S^{\alpha}_{y} :L^{2,\alpha}_{\Hq}(\R^+) \longrightarrow A^{2,\beta,\eta}_{slice}$ is compact for any $y>0$. 
\end{proposition}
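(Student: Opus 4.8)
The plan is to recognize $S^{\alpha}_{y}$ as a diagonal operator between orthonormal bases and to reduce compactness to the decay of its singular values, for which the relevant asymptotics have already been secured in the proof of the boundedness proposition.

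First, I would fix the two orthonormal systems at hand: the normalized Laguerre functions $\varphi_n^{\alpha}$ form an orthonormal basis of $L^{2,\alpha}_{\Hq}(\R^+)$, while the normalized monomials $\widehat{e}_n := e_n/\norm{e_n}_{\beta,\eta} = e_n/\sqrt{\pi\gamma_n^{\beta,\eta}}$ constitute an orthonormal basis of $A^{2,\beta,\eta}_{slice}$. Feeding the relation $S^{\alpha}_{y}\varphi_n^{\alpha} = \varphi_n^{\alpha}(y)\, e_n$ into these normalizations yields the diagonal representation $S^{\alpha}_{y}\varphi_n^{\alpha} = s_n\, \widehat{e}_n$, where the real coefficients $s_n = \varphi_n^{\alpha}(y)\sqrt{\pi\gamma_n^{\beta,\eta}}$ satisfy $|s_n| = \sqrt{c^{\alpha,\beta,\eta}_n(y)}$, matching the diagonal form already visible in the norm identity \eqref{NormSalpha}.

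Next, I would introduce the finite-rank truncations $S_N$ obtained by retaining only the terms of index $n<N$, that is $S_N\varphi_n^{\alpha}=s_n\widehat{e}_n$ for $n<N$ and $S_N\varphi_n^{\alpha}=0$ otherwise. Because the action is diagonal, the operator norm of the tail is exactly the supremum of the remaining coefficients, $\norm{S^{\alpha}_{y} - S_N} = \sup_{n\geq N}|s_n| = \sup_{n\geq N}\sqrt{c^{\alpha,\beta,\eta}_n(y)}$.

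Finally, I would invoke the estimate \eqref{asymcny}, namely $c^{\alpha,\beta,\eta}_n(y) \leq M^{\alpha,\eta}(y)\, n^{-\eta-1/2}$ for large $n$ and fixed $y>0$, which forces $s_n\to 0$ since $\eta>0$. Consequently $\norm{S^{\alpha}_{y} - S_N}\to 0$ as $N\to\infty$, exhibiting $S^{\alpha}_{y}$ as an operator-norm limit of finite-rank operators and hence compact. I do not expect a genuine obstacle here: the only quantitative input is the decay of $c^{\alpha,\beta,\eta}_n(y)$, which rests on the Laguerre asymptotics \eqref{asymcnLaguerre} already in hand, and the hypothesis $y>0$ is exactly what guarantees that decay. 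The sole point to treat with care is that some $\varphi_n^{\alpha}(y)$ may vanish, precisely for $n\in N_y^\alpha$; this merely sends the corresponding singular values to zero and does not affect the truncation argument.
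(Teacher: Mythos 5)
Your proposal is correct and follows essentially the same route as the paper: both exploit the diagonal representation $S^{\alpha}_{y}\varphi_n^{\alpha}=s_{n,y}^{\alpha}\,e_n/\sqrt{\pi\gamma_n}$ between the orthonormal bases $(\varphi_n^{\alpha})_n$ and $(e_n/\sqrt{\pi\gamma_n})_n$ and the decay $|s_{n,y}^{\alpha}|=\sqrt{c^{\alpha,\beta,\eta}_n(y)}\to 0$ furnished by \eqref{asymcny}. The only difference is that the paper cites a spectral-theorem criterion for compactness while you verify it directly by finite-rank truncation, which is a harmless (and slightly more self-contained) substitute.
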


\begin{proof}
	The compactness of $ S^{\alpha}_{y} $; $y>0$, follows by appealing to the spectral theorem  
	 \cite[Theorem 4.3.5]{HsingEubank2015}. In fact, $ S^{\alpha}_{y} $ is bounded and can be expanded as 
	\begin{align*}
	S^{\alpha}_{y} \varphi 
	&= \sum_{n=0}^\infty s_{n,y}^{\alpha} \frac{e_n }{\sqrt{\pi\gamma_n} } \scal{\varphi , \varphi_n^{\alpha} }_{L^{2,\alpha}_{\Hq}(\R^+)} 
	\end{align*}
	with $(\varphi_n^{\alpha} )_n$ and $\left( {e_n }/{\sqrt{\pi\gamma_n} }\right)_n$ are orthonormal bases of $L^{2,\alpha}_{\Hq}(\R^+)$ and $A^{2,\omega}_{slice}$, respectively, and	
	$s_{n,y}^{\alpha}:= \sqrt{\pi\gamma_n}\varphi_n^{\alpha}(y)$
	tends to $0$ when the wight function $\omega_{\beta,\eta}$ is specified. This readily follows making use of \eqref{asymcnLaguerre} 
	thanks to \eqref{asymcny} since 
	$$ |s_{n,y}^{\alpha}| =   \sqrt{c^{\alpha,\beta,\eta}_n(y)} .$$
\end{proof}

The membership of $ S^{\alpha}_{y} $ in the $p$-Schatten class is a direct  consequence of the next two results. Recall for instance that a bounded operator $S$ is said to be a Schatten operator of class  $p$ for $p\geq 1$ if 
its Schatten $p$-norm 
$$
\norm{S}_{p}:= \mbox{Tr} (|S|^{p})^{1/p} $$
is finite.

\begin{lemma} \label{adjointS}
	The adjoint of $S^{\alpha}_{y}$ is given by 
	\begin{equation}\label{adjointSexp}
	(S^{\alpha}_{y})^*  G  (x) = \int_{\mathbb{B}_I} K_{\overline{q} }^\alpha(x,y)    G(q) (1-|q|^2)^{\alpha-1}d\lambda_I(q).
	\end{equation}
\end{lemma}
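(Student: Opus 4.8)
The plan is to read off the adjoint directly from its defining relation
\[
\scal{S^{\alpha}_{y}\varphi,\, G}_{\omega}=\scal{\varphi,\, (S^{\alpha}_{y})^{*}G}_{L^{2,\alpha}_{\Hq}(\R^+)},\qquad \varphi\in L^{2,\alpha}_{\Hq}(\R^+),\ G\in A^{2,\omega}_{slice},
\]
by inserting the kernel representation of $S^{\alpha}_{y}$ and swapping the two integrations. The starting point is the rewriting already recorded in the text, namely $S^{\alpha}_{y}\varphi(q)=\scal{\overline{R_{q}^{\alpha}(\cdot,y)},\varphi}_{L^{2,\alpha}_{\Hq}(\R^+)}$, which with the conjugate-first convention of the $L^{2,\alpha}_{\Hq}(\R^+)$-inner product reads $S^{\alpha}_{y}\varphi(q)=\int_{0}^{\infty}R_{q}^{\alpha}(x,y)\,\varphi(x)\,x^{\alpha}e^{-x}\,dx$.

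First I would write out $\scal{S^{\alpha}_{y}\varphi,G}_{\omega}$ as an integral over $\mathbb{B}_I$ against $\omega(|q|^{2})\,d\lambda_I(q)$, substitute the integral above for $S^{\alpha}_{y}\varphi(q)$, and push the quaternionic conjugation inside the $x$-integral, recalling that conjugation reverses the order of quaternionic products, so that $\overline{R_{q}^{\alpha}(x,y)\varphi(x)}=\overline{\varphi(x)}\,\overline{R_{q}^{\alpha}(x,y)}$. This produces a double integral whose integrand is $\overline{\varphi(x)}\,\overline{R_{q}^{\alpha}(x,y)}\,G(q)$ against $x^{\alpha}e^{-x}\,dx$ and the weighted measure. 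Applying Fubini to interchange the order, the inner $\mathbb{B}_I$-integral collapses to a function of $x$ alone, and matching with $\scal{\varphi,(S^{\alpha}_{y})^{*}G}_{L^{2,\alpha}_{\Hq}(\R^+)}$ yields
\[
(S^{\alpha}_{y})^{*}G(x)=\int_{\mathbb{B}_I}\overline{R_{q}^{\alpha}(x,y)}\,G(q)\,\omega(|q|^{2})\,d\lambda_I(q).
\]
It then remains to identify $\overline{R_{q}^{\alpha}(x,y)}$ with $K_{\overline{q}}^{\alpha}(x,y)$: since the Laguerre functions $\varphi_{n}^{\alpha}$ are real-valued, the locally uniform expansion $R_{q}^{\alpha}(x,y)=\sum_{n}q^{n}\varphi_{n}^{\alpha}(x)\varphi_{n}^{\alpha}(y)$ from \eqref{expKerRHH} gives $\overline{R_{q}^{\alpha}(x,y)}=\sum_{n}\overline{q}^{\,n}\varphi_{n}^{\alpha}(x)\varphi_{n}^{\alpha}(y)=R_{\overline{q}}^{\alpha}(x,y)=K_{\overline{q}}^{\alpha}(x,y)$, while specializing to the Bergman weight $\omega(t)=(1-t)^{\alpha-1}$ turns $\omega(|q|^{2})\,d\lambda_I(q)$ into $(1-|q|^{2})^{\alpha-1}d\lambda_I(q)$, which is exactly the asserted formula.

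The main obstacle is the rigorous justification of the Fubini interchange, i.e. the absolute integrability of the integrand on $\R^{+}\times\mathbb{B}_I$. I would control this via the pointwise bound $|S_{y}^{\alpha}\varphi(q)|\le \paren{R_{|q|^{2}}^{\alpha}(y,y)}^{1/2}\norm{\varphi}_{L^{2,\alpha}_{\Hq}(\R^+)}$ coming from Cauchy--Schwarz and \eqref{EqWD}, together with the integrability hypothesis \eqref{BoundeCond}; these ensure that $\int_{\mathbb{B}_I}|G(q)|\paren{\int_{0}^{\infty}|R_{q}^{\alpha}(x,y)|\,|\varphi(x)|\,x^{\alpha}e^{-x}\,dx}\omega(|q|^{2})\,d\lambda_I(q)<\infty$ for every $G$ in the range space, so that Fubini applies once boundedness of $S^{\alpha}_{y}$ is in force. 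A secondary, purely bookkeeping point is to keep the quaternionic factors on their correct sides throughout; this is harmless here because the measures, the weight, and the scalar values $\varphi_{n}^{\alpha}(x),\varphi_{n}^{\alpha}(y)$ entering the kernel are all real, so no genuine noncommutativity intervenes. As a consistency check, the formula can be verified on the bases by testing against $\varphi_{n}^{\alpha}$ and $e_{m}$ and using the eigenrelation $S^{\alpha}_{y}\varphi_{n}^{\alpha}=\varphi_{n}^{\alpha}(y)\,e_{n}$, which reproduces the same matrix elements.
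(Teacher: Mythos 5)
Your proof is correct and follows essentially the same route as the paper's: both compute $(S^{\alpha}_{y})^{*}$ from the defining relation $\scal{S^{\alpha}_{y}\varphi,G}_{\omega}=\scal{\varphi,(S^{\alpha}_{y})^{*}G}_{L^{2,\alpha}_{\Hq}(\R^+)}$ by a Fubini-type interchange and then identify the resulting kernel with the one in \eqref{KernelBessel}. The only cosmetic difference is that the paper interchanges a sum (expanding $S^{\alpha}_{y}\varphi$ in the bases $\{\varphi_n^{\alpha}\}$ and $\{e_n\}$ via the eigenrelation) whereas you interchange the two integrals directly; your explicit justification of absolute integrability and of the identification $\overline{R_{q}^{\alpha}(x,y)}=K_{\overline{q}}^{\alpha}(x,y)$ is, if anything, more careful than what the paper records.
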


\begin{proof}
	For every $\varphi  \in L^{2,\alpha}_{\Hq}(\R^+) $ and $ G \in A^{2,\omega}_{slice}$ 
	we have
	\begin{align*}
	\scal{ S^{\alpha}_{y}\varphi,G}_{A^{2,\omega}_{slice}} 
	& =    \sum_{n=0}^\infty \varphi_n^{\alpha} (y) \scal{\varphi , \varphi_n^{\alpha} }_{L^{2,\alpha}_{\Hq}(\R^+) }  \scal{ e_n ,  G }_{A^{2,\omega}_{slice}}
	=   \scal{\varphi , (S^{\alpha}_{y})^*  G}_{L^{2,\alpha}_{\Hq}(\R^+) }.
	\end{align*}
	This readily follows by applying Fubini's theorem. 
	Therefore, the adjoint of $S^{\alpha}_{y}$ given by
	\begin{align*}
	(S^{\alpha}_{y})^*  G(x)  =   \scal{ \sum_{n=0}^\infty \varphi_n^{\alpha} (y)  \varphi_n^{\alpha}   e_n  ,  G }_{A^{2,\omega}_{slice}}  =    \scal{  K(x,y| \cdot ) ,  G  }_{A^{2,\omega}_{slice}} 
	\end{align*}
which	reduces further to 
	\eqref{adjointSexp}
	since $K(x,y| q)$ is exactly the kernel function in \eqref{KernelBessel}. 	
\end{proof}

\begin{proposition} \label{propsv}
	If $  S^{\alpha}_{y}$  is bounded,  then their singular values are given by \begin{align}\label{ingeigenvS}
	|s_{n,y}^{\alpha}| = \left( \pi \gamma_n\right) ^{1/2} |\varphi_n^{\alpha} (y)|.
	\end{align}
\end{proposition}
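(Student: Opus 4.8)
The plan is to realize $(\varphi_n^{\alpha})_n$ as a complete orthonormal system of eigenfunctions of the positive self-adjoint operator $(S^{\alpha}_{y})^* S^{\alpha}_{y}$ on $L^{2,\alpha}_{\Hq}(\R^+)$ and to recover the singular values as the square roots of the associated eigenvalues. Indeed, by definition the singular values of $S^{\alpha}_{y}$ are the eigenvalues of $\abs{S^{\alpha}_{y}}=((S^{\alpha}_{y})^* S^{\alpha}_{y})^{1/2}$, so it suffices to diagonalize $(S^{\alpha}_{y})^* S^{\alpha}_{y}$ on this basis; the boundedness hypothesis secures the existence of the adjoint and of the polar factor, while the quaternionic spectral theory invoked in Proposition \ref{corcomp} guarantees that this prescription is meaningful.

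First I would collect the two facts already at our disposal: the eigenvalue relation $S^{\alpha}_{y}(\varphi_n^{\alpha}) = \varphi_n^{\alpha}(y)\,e_n$ from Section 3, and the orthogonality $\scal{e_m,e_n}_\omega = \pi\gamma_n\delta_{m,n}$ used in the proof of Proposition \ref{range}. A crucial observation is that $\varphi_n^{\alpha}(y)$ is a \emph{real} number, being a real scalar multiple of $L^{(\alpha)}_n$ evaluated at the real point $y$ in \eqref{basisLaguerre}; this reality is precisely what permits the scalar factors to be moved freely across the (right) quaternionic inner products and past $(S^{\alpha}_{y})^*$.

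Next I would evaluate $(S^{\alpha}_{y})^* e_m$ by testing against the orthonormal basis. Using $\scal{(S^{\alpha}_{y})^* e_m,\varphi_n^{\alpha}}_{L^{2,\alpha}_{\Hq}(\R^+)} = \scal{e_m,S^{\alpha}_{y}\varphi_n^{\alpha}}_{A^{2,\omega}_{slice}}$ together with the two facts above, one finds
\begin{align*}
\scal{(S^{\alpha}_{y})^* e_m,\varphi_n^{\alpha}}_{L^{2,\alpha}_{\Hq}(\R^+)} = \varphi_n^{\alpha}(y)\,\scal{e_m,e_n}_\omega = \pi\gamma_m\,\varphi_m^{\alpha}(y)\,\delta_{m,n},
\end{align*}
hence $(S^{\alpha}_{y})^* e_m = \pi\gamma_m\,\varphi_m^{\alpha}(y)\,\varphi_m^{\alpha}$. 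Composing with the eigenvalue relation then yields
\begin{align*}
(S^{\alpha}_{y})^* S^{\alpha}_{y}\,\varphi_m^{\alpha} = \varphi_m^{\alpha}(y)\,(S^{\alpha}_{y})^* e_m = \pi\gamma_m\,\abs{\varphi_m^{\alpha}(y)}^2\,\varphi_m^{\alpha},
\end{align*}
so that $\varphi_m^{\alpha}$ is an eigenfunction with eigenvalue $\pi\gamma_m\abs{\varphi_m^{\alpha}(y)}^2$. Since $(\varphi_m^{\alpha})_m$ is complete, these exhaust the spectrum, and taking square roots gives $\abs{s_{n,y}^{\alpha}} = (\pi\gamma_n)^{1/2}\abs{\varphi_n^{\alpha}(y)}$, in agreement with the Schmidt expansion of $S^{\alpha}_{y}$ between the orthonormal systems $(\varphi_n^{\alpha})_n$ and $(e_n/\sqrt{\pi\gamma_n})_n$ recorded in Proposition \ref{corcomp}. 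As an alternative route, I could simply invoke that very expansion: it already displays $S^{\alpha}_{y}$ in diagonal form with respect to two orthonormal bases, which is by definition a singular value decomposition, whence the singular values are the moduli of the diagonal coefficients.

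The only delicate point is the noncommutative bookkeeping: one must make sure the scalar $\varphi_n^{\alpha}(y)$ genuinely commutes past the quaternionic inner products and past $(S^{\alpha}_{y})^*$, which is exactly what its reality guarantees, so no real obstacle remains. Beyond that, I would only need to confirm that the quaternionic spectral/SVD framework already used for compactness legitimizes identifying singular values with square roots of eigenvalues of $(S^{\alpha}_{y})^* S^{\alpha}_{y}$, which is the setting adopted in Proposition \ref{corcomp}.
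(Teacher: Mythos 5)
Your proposal is correct and follows essentially the same route as the paper: compute $(S^{\alpha}_{y})^* e_n = \pi\gamma_n\,\varphi_n^{\alpha}(y)\,\varphi_n^{\alpha}$ (the paper gets this from its Lemma \ref{adjointS}, you get it by testing against the basis, which amounts to the same computation), then diagonalize $(S^{\alpha}_{y})^* S^{\alpha}_{y}$ on the Laguerre basis with eigenvalues $\pi\gamma_n\abs{\varphi_n^{\alpha}(y)}^2$ and take square roots. Your remarks on the reality of $\varphi_n^{\alpha}(y)$ and the alternative reading via the Schmidt expansion are sensible additions but do not change the argument.
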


\begin{proof}	
	By Lemma \ref{adjointS}, we have $	(S^{\alpha}_{y})^*  e_k =	\pi \gamma_k  \varphi^\alpha_k(y) \varphi^\alpha_k $, and therefore, the operator $(S^{\alpha}_{y})^*   S^{\alpha}_{z} : L^{2,\alpha}_{\Hq}(\R^+) \longrightarrow L^{2,\alpha}_{\Hq}(\R^+)$ satisfies
	$$ 
	(S^{\alpha}_{y})^*   S^{\alpha}_{z} \varphi_n^{\alpha}  
	=  (S^{\alpha}_{y})^* \left(  \varphi_n^{\alpha} (z) e_n\right)  
	=  \pi \gamma_n  \varphi_n^{\alpha} (y) \varphi_n^{\alpha} (z) \varphi_n^{\alpha} .$$
	This is to say that the Laguerre functions 
	$\varphi^{\alpha}_{n}$ in \eqref{basisLaguerre} 
	constitute an orthogonal basis of $L^2$-eigenfunctions for $(S^{\alpha}_{y})^*   S^{\alpha}_{y}$ with $\pi \gamma_n (\varphi_n^{\alpha} (y) )^2$ as corresponding eigenvalues. Therefore, the eigenvalues of $|(S^{\alpha}_{y})^*|:= ( (S^{\alpha}_{y})^*S^{\alpha}_{y})^{1/2}$ are exactly those given through \eqref{ingeigenvS}.
\end{proof}

\begin{proposition}\label{thmSchayyen}
	Let $y>0$ and $p>4/(1+2\eta)$. Then, 
	$ S^{\alpha}_{y}  :  L^{2,\alpha}_{\Hq}(\R^+) \longrightarrow A^{2,\beta,\eta}_{slice}$ is a Schatten operator of class $p$. 
\end{proposition}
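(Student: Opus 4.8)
The plan is to reduce membership in the $p$-Schatten class to the summability of the $p$-th powers of the singular values. Since $y>0$, Proposition \ref{corcomp} already guarantees that $S^{\alpha}_{y}$ is compact, so its Schatten $p$-norm is computed as the $\ell^p$-norm of its singular values, namely $\norm{S^{\alpha}_{y}}_{p}^p = \sum_{n=0}^\infty |s_{n,y}^{\alpha}|^p$. By Proposition \ref{propsv}, together with the identification $|s_{n,y}^{\alpha}| = \sqrt{c^{\alpha,\beta,\eta}_n(y)}$ recorded in the proof of Proposition \ref{corcomp}, this sum rewrites as $\sum_{n=0}^\infty \big(c^{\alpha,\beta,\eta}_n(y)\big)^{p/2}$, where $c^{\alpha,\beta,\eta}_n(y) = \pi \gamma_n^{\beta,\eta} |\varphi_n^{\alpha}(y)|^2$. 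Thus the whole question is whether this last series converges.

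Next I would invoke the asymptotic estimate \eqref{asymcny}, which for fixed $y>0$ furnishes a constant $M^{\alpha,\eta}(y)>0$ with $c^{\alpha,\beta,\eta}_n(y) \leq M^{\alpha,\eta}(y)\, n^{-\eta-1/2}$ for all large $n$. Raising to the power $p/2$ yields $\big(c^{\alpha,\beta,\eta}_n(y)\big)^{p/2} \leq \big(M^{\alpha,\eta}(y)\big)^{p/2}\, n^{-p(2\eta+1)/4}$ for large $n$, since $(\eta+1/2)/2 = (2\eta+1)/4$. The tail of the series is therefore dominated by a constant multiple of $\sum_n n^{-p(2\eta+1)/4}$, which converges precisely when $p(2\eta+1)/4 > 1$, that is, when $p > 4/(1+2\eta)$. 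This is exactly the standing hypothesis, so $\norm{S^{\alpha}_{y}}_{p}$ is finite and $S^{\alpha}_{y}$ lies in the $p$-Schatten class, completing the argument.

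There is no genuine obstacle here once the singular values are in hand: the entire argument rests on the asymptotic \eqref{asymcny}, whose derivation (via the Laguerre asymptotics \eqref{asymcnLaguerre} and the asymptotics of the Gamma function) was already carried out in the proof of Proposition \ref{thmBound}. The only point requiring care is matching the resulting exponent $-p(2\eta+1)/4$ against the convergence threshold $1$ of the $p$-series, which returns exactly the stated bound $p > 4/(1+2\eta)$; note that for $\eta \geq 3/2$ this is automatic from the defining constraint $p\geq 1$, while for $0<\eta<3/2$ it is the effective restriction.
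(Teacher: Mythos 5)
Your proposal is correct and follows essentially the same route as the paper: compactness from Proposition \ref{corcomp}, the singular values $|s_{n,y}^{\alpha}|=\sqrt{c^{\alpha,\beta,\eta}_n(y)}$ from Proposition \ref{propsv}, the asymptotic bound \eqref{asymcny}, and the $p$-series criterion yielding the threshold $p>4/(1+2\eta)$. The only additions are cosmetic (the remark on when the condition is automatic), so nothing further is needed.
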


\begin{proof}
	The operator $S^{\alpha}_{y}$; $y>0$, is compact thanks to Proposition \ref{corcomp}. To conclude we need only to apply Proposition  \ref{propsv} keeping in mind \eqref{asymcny}
	 for large $n$. 
	 Thus, the singular values $s_{n,y}^{\alpha}$ satisfy 
	\begin{align}
	|s_{n,y}^{\alpha}|=\sqrt{ 
		c^{\alpha,\beta,\eta}_n(y)}
	= O(n^{-(2\eta+1)/4}).
	\end{align}
	Therefore, if $(1+2\eta)p>4$, the the series $\sum_{n=0}^\infty  |s_{n,y}^{\alpha}| ^p$ converges and therefore 
	$$\norm{S^{\alpha}_{y}}_{p}:= \mbox{Tr} (|S^{\alpha}_{y}|^{p})^{1/p} = 
\left( 	\sum_{n=0}^\infty  |s_{n,y}^{\alpha}| ^p \right)^{1/p} <+\infty.$$
The second equality follows since $S^{\alpha}_{y}$ is compact from/into separable Hilbert spaces. 
\end{proof}

\begin{remark}
	For $y=0$ and large $n$, we have 
	\begin{align*} |s^\alpha_n(0)|=\sqrt{c^{\alpha,\beta,\eta}_n(0)}&
	\sim \frac{\sqrt{\pi\Gamma(\eta)}}{\Gamma(\alpha+1)}n^{(\alpha-\eta)/2}  .
	\end{align*}
	Subsequently, the transform $S^{\alpha}_{0}:  L^{2,\alpha}_{\Hq}(\R^+) \longrightarrow A^{2,\beta,\eta}_{slice}$ is compact if and only if $\eta>\alpha$, since in this case $\lim\limits_{n \to +\infty} |s^\alpha_n(0)|=0$. Moreover, it is in $p$-Schatten class if in addition $p>2/(\eta-\alpha)$.
\end{remark}

We conclude by providing the singular value decomposition of $S^{\alpha}_{0}$. Thus, we consider the mapping defined by 
$U^{\alpha}_{y}(\varphi_n^{\alpha} ) =  0$ for $n\in N_y^\alpha$ and   
$$U^{\alpha}_{y}(\varphi_n^{\alpha} ) =  
\frac{\varphi_n^{\alpha} (y)}{\sqrt{\pi \gamma_n} |\varphi_n^{\alpha} (y)|} e_n 
$$
otherwise. We extend $U^{\alpha}_{y}$ in a natural way to  linear mapping on the whole $ L^{2,\alpha}_{\Hq}(\R^+)$. 
By means of  $ \norm{ e_n}_{L^{2,\omega}_{\Hq}(\mathbb{B}_I)}^2 = \pi \gamma_n $, we get 
$$\norm{U^{\alpha}_{y}(\varphi_n^{\alpha} )}_{L^{2,\alpha}_{\Hq}(\R^+)} =
\left|\frac{\varphi_n^{\alpha} (y)}{\sqrt{\pi \gamma_n} |\varphi_n^{\alpha} (y)|} \right| \norm{ e_n}_{L^{2,\omega}_{\Hq}(\mathbb{B}_I)} = 1. $$ 
 Then, we claim that the following assertions hold trues:

\begin{enumerate}
	\item[i)] $U^{\alpha}_{y} :  L^{2,\alpha}_{\Hq}(\R^+) \longrightarrow L^{2,\omega}_{\Hq}(\mathbb{B}_I) $ is a partial isometry.
	
	\item[ii)] We have $ \ker(S^{\alpha}_{y}) = \ker(U^{\alpha}_{y}) = span\{e_n; \, n\in N_y^\alpha  \}$.	
\end{enumerate}
Moreover,  the singular value decomposition of the linear operator $S^{\alpha}_{y}$ is given by 
$$S^{\alpha}_{y} = U^{\alpha}_{y} |S^{\alpha}_{y} |.$$  	This readily follows by direct computation.


\bibliographystyle{amsplain}

\begin{thebibliography}{99}
	
	\bibitem{AndrewsAskeyRoy1999} Andrews G.E., Askey R., Roy R.,
	{ Special functions}.
	Encyclopedia of Mathematics and its Applications, 71. Cambridge University Press: Cambridge; 1999.
	
	\bibitem{Bracewell1999} Bracewell R., The Hankel transform. The Fourier transform and its applications, 3rd ed. New York: McGraw-Hill, (1999) 244-250. 
	
	\bibitem{ElkachGh2018} Elkachkouri A.,  Ghanmi A., The hyperholomorphic Bergman space on $\mathbb{ B}_{R}$: Integral representation and asymptotic behavior.  
	Complex Anal. Oper. Theory 12 (2018), no. 5, 1351--1367.
	
	\bibitem{ElkGhHa20}  Elkachkouri A.,  Ghanmi A.,  Hafoud A.,
	Bargmann's versus of the quaternionic fractional Hankel transform.
	Submitted.
	
	\bibitem{HsingEubank2015} Hsing T., Eubank R.,
	Theoretical foundations of functional data analysis, with an introduction to linear operators 2015.
	
	\bibitem{Karp1994} Karp D.B.,
	Fractional Hankel transformation and its applications in mathematical physics. (Russian); translated from Dokl. Akad. Nauk 338 (1994), no. 1, 10--14 Russian Acad. Sci. Dokl. Math. 50 (1995), no. 2, 179--185
	
	\bibitem{Kerr1991} Kerr F.H., 
	A fractional power theory for Hankel transforms. 
	J. Math. Anal. Appl. 158 (1991) 114--123
	
	\bibitem{Krenk1982} Krenk S., Some integral relations of Hankel transform type and applications to elasticity theory. Integral Equations and Operator Theory, 5(1), (1982) 548--561. 
	
	\bibitem{MagnusOberhettingerSoni1966}   Magnus W., Oberhettinger F., Soni R.P.,
	Formulas and Theorems in the Special Functions of Mathematical Physics. 
	Springer-Verlag, Berlin, 1966.
	
	\bibitem{Namias1980b} Namias V., Fractionalization of Hankel transforms. J. Inst. Math. Appl. 26 (1980), no. 2, 187--197.
	
	\bibitem{PrasadMahatoSinghDixit2013} Prasad A., Mahato A., Singh V.K., Dixit M.M., The continuous fractional Bessel wavelet transformation. Bound. Value Probl. 2013, 40 (2013), 16 pp.
	
	
	
	\bibitem{SheppardLarkin1998} Sheppard C.J.R., Larkin, K.G., Similarity theorems for fractional Fourier transforms and fractional Hankel transforms. Opt. Commun. 154 (1998)  173--178.
	
	\bibitem{Uzun2015}  \"Unalmı\c{s} Uzun B., Fractional Hankel and Bessel wavelet transforms of almost periodic signals.  
	J. Inequal. Appl., 388 (2015), 12 pp.
\end{thebibliography}

\end{document}